\newtheorem{theorem}{Theorem}[section]
\newtheorem{lm}[theorem]{Lemma}
\theoremstyle{definition}
\theoremstyle{remark}
\def\veps{\varepsilon}
\def\RR{\mathbb R}
\def\RN{{\mathbb R^N}}
\def\Om{\Omega}
\def\De{\Delta}
\def\Ga{\Gamma}
\def\ga{\gamma}
\def\om{\omega}
\def\pa{\partial}
\def\veps{\varepsilon}
\def\th{\theta}
\def\si{\sigma}
\def\dist{\mbox{\rm dist}}
\newcommand{\osc}{\mathop{\mathrm{osc}}}
\def\ovr{\overline}
\begin{document}
    \title[]{A note on Serrin's overdetermined problem}

\author[G. Ciraolo]{Giulio Ciraolo}
\address{Dipartimento di Matematica e
Informatica, Universit\`a di Palermo, Via Archirafi 34, 90123, Italy.}
\email{g.ciraolo@math.unipa.it}
\urladdr{http://www.math.unipa.it/~g.ciraolo/}

\author[R. Magnanini]{Rolando Magnanini}
    \address{Dipartimento di Matematica ed Informatica ``U.~Dini'',
Universit\` a di Firenze, viale Morgagni 67/A, 50134 Firenze, Italy.}
    \email{magnanin@math.unifi.it}
    \urladdr{http://web.math.unifi.it/users/magnanin}

    \keywords{Serrin's problem, Parallel surfaces,
overdetermined problems, method of moving planes, stability}
    \subjclass{Primary 35B06, 35J05, 35J61; Secondary 35B35, 35B09}
\begin{abstract}
We consider the solution of the torsion problem 
$$
-\De u=1 \ \mbox{ in } \ \Om, \ \ u=0 \ \mbox{ on } \ \pa\Om.
$$
\par
Serrin's celebrated symmetry theorem states that, if the normal derivative $u_\nu$ is constant on $\pa\Om$, then $\Om$ must be a ball. In \cite{CMS2}, it has been conjectured that Serrin's theorem may be obtained {\it by stability} in the following way: first, for the solution $u$ of the torsion problem
prove the estimate 
$$
r_e-r_i\le C_t\,\Bigl(\max_{\Ga_t} u-\min_{\Ga_t} u\Bigr)
$$
for some constant $C_t$ depending on $t$, where $r_e$ and $r_i$ are the radii of an annulus containing $\pa\Om$ and 
$\Ga_t$ is a surface parallel to $\pa\Om$ at distance $t$ and sufficiently close to $\pa\Om$; secondly, if in addition  $u_\nu$ is constant on $\pa\Om$, show that
$$
\max_{\Ga_t} u-\min_{\Ga_t} u=o(C_t)\ \mbox{ as } \ t\to 0^+.
$$
\par
The estimate constructed in \cite{CMS2} is not sharp enough to achieve this goal. In this paper,
we analyse a simple case study and show that the scheme is successful if the admissible domains $\Om$ are ellipses.

%
\end{abstract}


\maketitle

\section[Introduction]{Introduction}
Let $\Om$ be a bounded domain in $\RN$ and let $u$ be the solution of the torsion problem
\begin{equation}\label{pb torsion}
-\De u=1 \ \mbox{ in } \ \Om, \ \ u=0 \ \mbox{ on } \ \pa\Om.
\end{equation}
Serrin's celebrated symmetry theorem \cite{Se} states that, if there exists a solution of \eqref{pb torsion} 
whose (exterior) normal derivative $u_\nu$ is constant on $\pa\Om$, that is such that
\begin{equation}\label{overdet serrin}
u_\nu= c \ \mbox{ on } \ \pa\Om,
\end{equation}
then $\Om$ is a ball and $u$ is radially symmetric.

As is well-known, the proof of Serrin makes use of the {\it method of moving planes} (see \cite{Se,Fr}), a refinement of Alexandrov's reflection principle \cite{Al}. 


The aim of this note is to probe the feasibility of a new proof of Serrin's symmetry theorem based on a comparison with another overdetermined problem for \eqref{pb torsion}.
In fact, it has been noticed that, under certain sufficient conditions on $\pa\Om$, if the solution of 
\eqref{pb torsion} is constant on a surface {\it parallel} to $\pa\Om$, that is, if for some small $t>0$
\begin{equation}\label{Gamma_t def}
u=k \ \mbox{ on } \ \Ga_t, \ \mbox{ where } \ \Gamma_t = \{x \in \Om:\ \dist(x,\pa \Om)=t\},
\end{equation}
then $\Om$ must be a ball  (see \cite{MSaihp, MSm2as, CMS2} and \cite{Sh}). 
\par
Condition \eqref{Gamma_t def} was first studied in \cite{MSaihp} (see also \cite{MSm2as} and \cite{CMS} for further developments), motivated by an investigation on {\it time-invariant level surfaces} of a nonlinear non-degenerate {\it fast diffusion} equation (tailored upon the heat equation), and was used
to extend to nonlinear equations the symmetry results obtained in \cite{MS1} for the heat equation.
The proof still hinges on the method of moving planes, that can be applied in a much simplified manner,
since the overdetermination in \eqref{Gamma_t def} takes place inside $\Om$. 
Under slightly different assumptions and by a different proof --- still based on the method of moving planes --- a similar result was obtained in \cite{Sh} independently.

\par
The evident similarity between the two problems arouses a natural question:
{\it is condition \eqref{Gamma_t def} weaker or stronger than \eqref{overdet serrin}? } 

As pointed out in \cite{CMS2}, 
\eqref{Gamma_t def} seems to be weaker than \eqref{overdet serrin}, as explained
by the following two observations: (i)  as \eqref{Gamma_t def} does not imply \eqref{overdet serrin}, the latter can be seen as the limit of a sequence of conditions of type \eqref{Gamma_t def} with $k=k_n$ and $t=t_n$ and
$k_n$ and $t_n$ vanishing as $n\to\infty$; (ii) as \eqref{overdet serrin} does not imply \eqref{Gamma_t def} either, if $u$ satisfies \eqref{pb torsion}--\eqref{overdet serrin}, then the oscillation of $u$ on a surface parallel to the boundary becomes smaller than usual, the closer the surface is to $\pa \Om$. 
More precisely, if $u\in C^1(\ovr{\Om})$, by a Taylor expansion argument, it is easy to verify that
\begin{equation} \label{oscillation o(t)}
\max_{\Gamma_t} u - \min_{\Gamma_t} u = o(t) \ \mbox{ as } \ t\to 0
\end{equation}
 --- that becomes a $O(t^2)$ as $t\to\infty$ when $u\in C^2(\ovr{\Om})$.
\par
This remark suggests the possibility that Serrin's symmetry result may be obtained {\it by stability} in the following way: first, for the solution $u$ of the torsion problem \eqref{pb torsion} prove the estimate 
\begin{equation}
\label{estimate}
r_e-r_i\le C_t\,\Bigl(\max_{\Ga_t} u-\min_{\Ga_t} u\Bigr)
\end{equation}
for some constant $C_t$ depending on $t$,  where $r_e$ and $r_i$ are the radii of an annulus containing $\pa\Om$; secondly, 
if in addition  $u_\nu$ is constant on $\pa\Om$, show that
$$
\max_{\Ga_t} u-\min_{\Ga_t} u=o(C_t)\ \mbox{ as } \ t\to 0^+.
$$
\par
In the same spirit of \eqref{estimate}, based on \cite{ABR}, in \cite{CMS2} we proved an estimate 
that quantifies the radial symmetry of $\Om$ in terms of the following quantity:
\begin{equation}\label{seminorm CMS2}
[u]_{\Ga_t}= \sup_{\substack{z, w\in \Ga_t \\ z \neq  w}} \frac{|u(z)-u( w)|}{|z- w|}.
\end{equation}
In fact, it was proved that there exist two constants $\veps, C_t>0$ such that, if
$[u]_{\Ga_t} \le\veps$,
then there are two concentric balls $B_{r_i}$ and $B_{r_e}$ such that
\begin{eqnarray}
B_{r_i}\subset\Om\subset B_{r_e}\quad\mbox{ and } \quad
r_e-r_i\le C_t\,[u]_{\Ga_t}  .\label{stabil2}
\end{eqnarray}
The constant $C_t$ only depends on $t$, $N$, the regularity
of $\pa\Om$ and the diameter of $\Om$. 
\par
The calculations in \cite{CMS2} imply that
$C_t$ blows-up exponentially as $t$ tends to $0$, which is too fast for our purposes,
since $[u]_{\Ga_t}$ cannot vanish faster than $t^2$, when \eqref{overdet serrin} holds. The exponential dependence of $C_t$ on $t$ is due to the method of proof we employed, which is based on the idea of
refining the method of moving planes from a quantitative point of view. As that method is
based on the maximum (or comparison) principle, its quantitative counterpart is based on {\it Harnack's inequality} and some quantitative versions of {\it Hopf's boundary lemma}. The exponential dependence of the constant involved in Harnack's inequality leads to that of $C_t$. Recent (unpublished) calculations,
based on more refined versions of Harnack's inequality, 
show that the growth rate of $C_t$ can be improved, but they are still inadequate to achieve our goal.
Approaches to stability based on the ideas contained in \cite{BNST2} and \cite{BNST3} do not seem to work for problem \eqref{pb torsion}-\eqref{Gamma_t def}.



In this note, we shall show that our scheme (i)-(ii) is successful, at least if the admissible 
domains are ellipses:
in this case, the deviation from radial symmetry can be exactly computed in terms of the oscillation of $u$ on $\Ga_t$. We obtain \eqref{estimate} with $C_t=O(t^{-1})$ as $t\to 0^+$;
thus, formula \eqref{oscillation o(t)} yields the desired symmetry.\footnote{Of course, in this very special case, there is a trivial proof of symmetry, but this is not the point.}

\setcounter{equation}{0}

\section{Section 2}
We begin by defining the three quantities that we shall exactly compute later on.  Let $\Ga$ be a $C^1$-regular closed simple plane curve and let $z(s)$, $s\in [0,|\Gamma|)$ be its parameterization by arc-lenght. For a function $u: \Gamma \to \RR$, we will consider 
the seminorms
\begin{equation}\label{sdf}
    |u|_\Ga= \sup_{\substack{0\le s, s'\le |\Ga|\\ s\neq s'}}\,\frac{|u(z(s))-u(z(s'))|}{\min(|s-s'|,|\Gamma|-|s-s'|) }, \ 
[u]_{\Ga}= \sup_{\substack{z, w\in \Ga \\ z \neq  w}} \frac{|u(z)-u( w)|}{|z- w|},
\end{equation}
and the {\it oscillation}
\begin{equation}\label{osc}
\osc_\Ga u=\max_\Ga u-\min_\Ga u.
\end{equation}

We now consider an ellipse
\begin{equation*} \label{Omega ellipse}
E=\{z=(x,y) \in \RR^2:\ \frac{x^2}{a^2}+\frac{y^2}{b^2} < 1\},
\end{equation*}
with semi-axes $a$ and $b$ normalized by $a^{-2}+b^{-2}=1$, and let
\begin{equation}\label{defGammat}
\Ga_t=\{z \in E:\, \dist(z,\pa E) = t\}
\end{equation}
be the curve parallel to $\pa E$ at distance $t$; $\Ga_t$ is still regular and simple if $t$ is smaller than the minimal radius of curvature of $\pa E$, that is for 
\begin{equation}
\label{radius}
0\le t<\frac{\min(a^{3}, b^{3})}{2 a^2 b^2}\,.
\end{equation}
The solution $u$ of \eqref{pb torsion} is clearly given by
\begin{equation}\label{u ellipse}
u(x,y)=1- \frac{x^2}{a^2} - \frac{y^2}{b^2}.
\end{equation}

\begin{lm}
\label{lm:conti}
Let $u$ be given by \eqref{u ellipse} and let $t$ satisfy \eqref{radius}. Then, we have:
\begin{enumerate}
\item[(i)] $\displaystyle |u|_{\Ga_t} = |a-b|\,\frac{a+b}{a^2 b^2}\,t$;
\item[(ii)] $\displaystyle [u]_{\Ga_t}=|u|_{\Ga_t}$;
\item[(iii)] $\displaystyle \osc_{\Ga_t} u=|a-b|\,\frac{a+b}{a^2 b^2}\,\Bigl(\frac{2ab}{a+ b}- t\Bigr)\,t$.
\end{enumerate}
\end{lm}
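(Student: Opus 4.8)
The plan is to parametrise the parallel curve $\Ga_t$ explicitly and reduce all three quantities to one-variable optimisation problems. A convenient parametrisation of $\pa E$ is $z_0(\th)=(a\cos\th, b\sin\th)$, with outward unit normal $\nu(\th)=(b\cos\th, a\sin\th)/\sqrt{b^2\cos^2\th+a^2\sin^2\th}$; the inner parallel curve is then $z(\th)=z_0(\th)-t\,\nu(\th)$. Condition \eqref{radius} guarantees this is a regular simple curve. Substituting into \eqref{u ellipse}, the key observation is that $u$ along $\Ga_t$ depends on $\th$ only through the function $\rho(\th)^2:=\frac{x(\th)^2}{a^2}+\frac{y(\th)^2}{b^2}$, so that $u(z(\th))=1-\rho(\th)^2$. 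The first step is therefore to compute $\rho(\th)^2$ in closed form; I expect a clean expression of the shape $\rho(\th)^2 = 1 - 2t\,q(\th)/(ab) + t^2 q(\th)^2/(a^2b^2)\cdot(\text{something})$ where $q(\th)=\sqrt{b^2\cos^2\th+a^2\sin^2\th}$ — in fact, using $a^{-2}+b^{-2}=1$ one should find $u(z(\th)) = \frac{2t}{ab}q(\th) - \frac{t^2}{a^2b^2}q(\th)^2$ up to a routine simplification, so that $u$ on $\Ga_t$ is a fixed increasing function (for the relevant range of $q$) of $q(\th)$ alone.

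Granting that reduction, part (iii) is immediate: $\osc_{\Ga_t}u = f(q_{\max}) - f(q_{\min})$ where $f(q)=\frac{2t}{ab}q-\frac{t^2}{a^2b^2}q^2$ and $q$ ranges over $[\min(a,b),\max(a,b)]$ (since $q(\th)^2=b^2\cos^2\th+a^2\sin^2\th$ interpolates between $b^2$ and $a^2$). Plugging in $q_{\max}=\max(a,b)$, $q_{\min}=\min(a,b)$ and factoring should yield exactly the stated formula $|a-b|\frac{a+b}{a^2b^2}\bigl(\frac{2ab}{a+b}-t\bigr)t$; this is a short algebraic verification once one notes $a^2-b^2=(a-b)(a+b)$ and $\frac{a^2-b^2}{ab}=\frac{(a-b)(a+b)}{ab}$ matches up with the $\frac{2t}{ab}$ and $\frac{t^2}{a^2b^2}$ coefficients after combining. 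For part (i), the arc-length seminorm $|u|_{\Ga_t}$ is the supremum of $|u(z(s))-u(z(s'))|$ divided by the shorter arc length between the two points; since $u$ restricted to $\Ga_t$ is, after passing to arc length $s$, a function whose derivative with respect to $s$ has a maximum absolute value, $|u|_{\Ga_t}$ equals $\max_s |\frac{d}{ds}u(z(s))|$ (a standard fact: for a Lipschitz function on a circle the arc-length Lipschitz seminorm equals the sup of the derivative). Computing $\frac{d}{ds}u(z(s)) = \frac{d}{d\th}u(z(\th))\big/\frac{ds}{d\th}$ and maximising over $\th$ should give the clean value $|a-b|\frac{a+b}{a^2b^2}t$; I expect the maximum to be attained at the endpoints of the axes, where the geometry degenerates nicely.

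Part (ii), the identity $[u]_{\Ga_t}=|u|_{\Ga_t}$, is where the main obstacle lies. The inequality $[u]_{\Ga_t}\le |u|_{\Ga_t}$ is automatic because the chord length $|z-w|$ never exceeds the shorter arc length. The reverse inequality requires showing the chordal difference quotient can get arbitrarily close to the arc-length one, which forces us to identify \emph{which} pair of points realises the supremum in $[u]_{\Ga_t}$ and to check it equals the arc-length value exactly, not just asymptotically. The natural candidates are the two endpoints of the \emph{major} axis of $\Ga_t$ (the points where $u$ attains its min, i.e.\ where $q=q_{\min}$, symmetric about the centre); there the chord is a straight segment along the axis and both $u$ values coincide, so that pair gives $0$ — hence the extremal pair must be sought elsewhere, perhaps among pairs symmetric with respect to one axis. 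The strategy is to exploit the symmetries of $E$ (reflection across both axes) to argue that the extremal pair $z,w$ for $[u]_{\Ga_t}$ is symmetric across the minor axis, reducing to a one-parameter family, and then to show by direct computation that the maximising member of that family yields precisely $|a-b|\frac{a+b}{a^2b^2}t$. This comparison of a genuinely two-point (chordal) extremal problem with the one-point (derivative) extremal problem is the delicate part; everything else is bookkeeping.
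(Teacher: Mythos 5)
Your treatment of (i) and (iii) follows essentially the paper's route: parametrize $\Ga_t$ as $\ga(\th)-t\,\nu(\th)$, reduce (i) to maximizing the tangential derivative $|\langle Du(z(\si)),z'(\si)\rangle|$ (the arc-length seminorm of a $C^1$ function on a closed curve is indeed the sup of that derivative, via the mean value theorem), and reduce (iii) to a monotone function of $q(\th)=\sqrt{b^2\cos^2\th+a^2\sin^2\th}$ evaluated at its endpoint values $\min(a,b)$ and $\max(a,b)$. Two corrections, though. The closed form you guess is wrong in the interior of the range: with $a^{-2}+b^{-2}=1$ one finds $u(z(\th))=\frac{2t}{ab}\,q+\frac{t^2}{q^2}-t^2$, not $\frac{2t}{ab}\,q-\frac{t^2}{a^2b^2}\,q^2$; the two agree only at $q=a,b$, which is why your oscillation value still comes out right, but the monotonicity on $[\min(a,b),\max(a,b)]$ — the step that justifies evaluating only at the endpoints, and which is where \eqref{radius} enters — must be checked for the true expression. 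Also, in (i) the maximum of the tangential derivative is \emph{not} at the axis endpoints: there $\sin\th\cos\th=0$ and the tangential derivative vanishes; the maximum is at $|\tan\th|=b/a$, with value $|a^{-2}-b^{-2}|\,t=|a-b|\frac{a+b}{a^2b^2}\,t$.

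The genuine gap is (ii). First, you have the trivial and hard directions reversed: since the chord $|z-w|$ never exceeds the shorter arc, dividing by the \emph{smaller} quantity gives the \emph{larger} quotient, so the automatic inequality is $[u]_{\Ga_t}\ge|u|_{\Ga_t}$; what needs proof is $[u]_{\Ga_t}\le|u|_{\Ga_t}$. Second, your plan for the substantive direction — identify the extremal pair for $[u]_{\Ga_t}$, argue it is symmetric across the minor axis, and optimize over that one-parameter family — collapses: $u$ in \eqref{u ellipse} and $\Ga_t$ are invariant under reflection in either axis, so any pair exchanged by such a reflection satisfies $u(z)=u(w)$ and the difference quotient is identically zero on your whole family (you noticed this for the major-axis endpoints, but it kills every symmetric pair). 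The symmetry reduction is also unjustified in principle (invariance of the problem does not force extremizers to be symmetric), and in fact the supremum in $[u]_{\Ga_t}$ is attained by \emph{no} pair of distinct points: the paper's proof shows that a putative two-point maximizer contradicts the first-order condition for $s\mapsto\frac{u(z(s))-u(w)}{|z(s)-w|}$, hence the supremum is only approached along coalescing pairs $|z_n-w_n|\to0$, where the quotient tends to a tangential derivative $\langle Du(z),\om\rangle\le|u|_{\Ga_t}$. Some argument of this kind — showing that chordal difference quotients are dominated by the maximal tangential derivative — is the missing idea; as it stands, your computation over symmetric pairs would only yield a (here vacuous) lower bound for $[u]_{\Ga_t}$ and cannot give the required upper bound.
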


\begin{proof}
The standard parametrization of $\pa E$ is
$$
\ga(\th)=(a\cos\th,b\sin\th), \ \th\in[0,2\pi];
$$
thus,
$$
\Ga_t = \Big{\{}\gamma(\th)-t\,J\, \frac{\gamma'(\th)}{|\gamma'(\th)|} :\ \th \in [0,2\pi) \Big{\}},
$$
where $J$ is the rotation matrix
\begin{equation*}
\left(
\begin{array}{cc}
0 & 1  \\
-1 & 0
\end{array}
\right),
\end{equation*}
so that the outward unit normal is
$$
\nu(\th)=J\, \frac{\ga'(\th)}{|\ga'(\th)|}.
$$
\par
(i) The mean value theorem then tells us that
\begin{equation}\label{mean value thm}
\frac{|u(z(s))-u(z(s'))|}{\min(|s-s'|,|\Ga_t|-|s-s'|)} = |\langle D u(z(\sigma)),    z'(\sigma)\rangle|,
\end{equation}
for some $\sigma \in [0,|\Ga_t|]$. Since $\Ga_t$ is parallel to $\pa E$, we have
\begin{equation*}
 z'(\sigma)=\frac{\gamma'(\th(\sigma))}{|\gamma'(\th(\sigma))|},
\end{equation*}
where $\th(\sigma)$ is such that
\begin{equation*}
z(\sigma)= \gamma(\th(\sigma)) - t\, \nu(\th).
\end{equation*}
By \eqref{u ellipse}, we have that
\begin{equation*}
|\langle Du(z(\sigma)),  z'(\sigma)\rangle| = 2 | \langle Az(\sigma), z'(\sigma)\rangle| \ \mbox{ with } \ A = \left(
\begin{array}{cc}
a^{-2} & 0  \\
0 & b^{-2}
\end{array}
\right),
\end{equation*}
and hence
\begin{equation*}
|\langle Du(z(\sigma)),  z'(\sigma)\rangle|= 2 \Bigl| \frac{\langle A\gamma(\th),\ga'(\th)\rangle}{|\ga'(\th)|} - t\, \frac{\langle A\,J\,\ga'(\th),\gamma'(\th)\rangle }{|\gamma'(\th)|^2} \Bigr|,
\end{equation*}
with $\th=\th(\sigma)$. 
\par
Straightforward computations give:
\begin{eqnarray*}
&\gamma'(\th) = (-a\sin\th,b \cos \th),  &|\gamma'(\th)|= \sqrt{a^2\sin^2\th+b^2\cos^2\th},\\
&\langle A \gamma(\th),\gamma'(\th)\rangle = 0, 
&\langle AJ\gamma'(\th),\gamma'(\th)\rangle  = \frac{|a^2-b^2|}{ab}\, \sin\th \, \cos \th.
\end{eqnarray*}
Therefore,
\begin{equation*}
|\langle Du(z(\sigma)) \cdot  z'(\sigma)\rangle| = \frac{|a^2-b^2|}{ab}\,  \frac{2 |\tan \th|}{a^2 \tan^2\th + b^2}\,t;
\end{equation*}
this expression achieves its maximum if $|\tan\th| = b/a$, that gives:
\begin{equation*}
\max_{0\le\si\le |\Ga_t|}|\langle Du(z(\sigma)),  z'(\sigma)\rangle| = |a^{-2}-b^{-2}|\,t.
\end{equation*}
>From \eqref{mean value thm} we conclude.
\par
(ii) By a symmetry argument, we can always assume that $[u]_{\Ga_t}$ is attained for points $z$ and $ w$ (that may possibly coincide) in the first quadrant of the cartesian plane.
\par
Now, suppose that the value $[u]_{\Ga_t}$ is attained for two points $z, w\in\Ga_t$ with $z\not=w$.
Let $s\to z(s)\in\Ga_t$ be a parametrization by arclength of $\Ga_t$ such that $z(0)=z$ and let 
$\om= z'(0)$ be the tangent unit vector to $\Ga_t$ at $z$. The function defined by
$$
f(s)=\frac{u(z(s))-u(w)}{|z(s)-w|}
$$
has a relative maximum at $s=0$ and hence $f'(0)=0$; thus,
$$
\frac{\langle Du(z), \om\rangle}{|z-w|}=\frac{u(z)-u(w)}{|z-w|}\,\frac{\langle z-w, \om\rangle}{|z-w|^2}.
$$
Therefore, since $\langle z-w, \om\rangle\not=0$, we have that
$$
[u]_{\Ga_t}=\frac{\langle Du(z), \om\rangle}{\langle z-w, \om\rangle}\,|z-w|,
$$
that gives a contradiction, since the right-hand side increases with $z$ if the angle between $z-w$ and 
$\om$ decreases. 
\par
As a consequence,  we infer that
$$
[u]_{\Ga_t}=\lim_{n\to\infty}\frac{u(z_n)-u(w_n)}{|z_n-w_n|} \ \mbox{ where } \  z_n, w_n\in\Ga_t
\ \mbox{ and } \ |z_n-w_n|\to 0.
$$ 
Thus, by compactness, we can find a point $z\in\Ga_t$ such that
$$
[u]_{\Ga_t}=\langle Du(z), \om\rangle,
$$
where $\om$ is the tangent unit vector to $\Ga_t$ at $z$.
\par
It is clear now that $[u]_{\Ga_t}=|u|_{\Ga_t}$.
\vskip.2cm
(iii) If \eqref{radius} holds, the maximum and minimum of $u$ on $\Ga_t$ are attained at the points on $\Ga_t$ whose projections on $\pa E$ respectively maximize and minimize $|Du|$ on $\pa E$.
Thus, (iii) follows at once.
\par
In fact, for a point $z=\ga(\th)-t\,\nu(\th)$ on $\Ga_t$, calculations give that
\begin{eqnarray*}
u(z)&=&1-\langle A\,\ga(\th), \ga(\th)\rangle+2t\,\langle A\,\ga(\th), \nu(\th)\rangle-
t^2 \langle A\,\nu(\th), \nu(\th)\rangle=\\
&&2t\,\langle A\,\ga(\th), \nu(\th)\rangle-
t^2 \langle A\,\nu(\th), \nu(\th)\rangle,
\end{eqnarray*}
where
\begin{eqnarray*}
&&\langle A\,\ga(\th), \nu(\th)\rangle=\frac1{ab}\,\sqrt{b^2\cos^2\th+a^2\sin^2\th}\,;\\
&&\langle A\,\nu(\th), \nu(\th)\rangle=\frac1{a^2 b^2}\,\frac{b^4\cos^2\th+a^4\sin^2\th}{b^2\cos^2\th+a^2\sin^2\th}\,,
\end{eqnarray*}
so that, by the substitution $\xi=\sqrt{b^2\cos^2\th+a^2\sin^2\th}$, we obtain that
$$
u(z)=\frac{2t}{ab}\,\xi+\frac{t^2}{\xi^2}-(a^{-2}+b^{-2})\,t^2.
$$
Since \eqref{radius} holds, this function is respectively maximal or minimal when 
$\xi=\min(a,b)$ or $\max(a,b)$.
 \end{proof}


Therefore, for an ellipse $E$, \cite{CMS2}[Theorem 1.1] can be stated as follows, together with two analogues.

\begin{theorem} \label{thm seminorm stability}
Let $u$ be the solution of \eqref{pb torsion} in an ellipse $E$ of semi-axes $a$ and $b$. 
Let $\Ga_t$ be the curve \eqref{defGammat} parallel to $\pa E$ at distance $t$ satisfying \eqref{radius}.
\par
Then, there are two concentric balls $B_{r_i}$ and $B_{r_e}$ such that $ B_{r_i}\subset E\subset B_{r_e}$ and
\begin{equation*}\label{stability ellipse}
r_e-r_i =  \frac{1}{t}\, \frac{a^2b^2}{a+b}\,|u|_{\Ga_t}; \ \ r_e-r_i =  \frac{1}{t}\, \frac{a^2b^2}{a+b}\,[u]_{\Ga_t};
\end{equation*}
\begin{equation*}\label{stability ellipse2}
r_e-r_i =  \frac{1}{t}\, \frac{a^2b^2}{a+b}\,\osc_{\Ga_t}u.
\end{equation*}
\end{theorem}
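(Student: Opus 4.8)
The plan is to take $B_{r_i}$ and $B_{r_e}$ to be the inscribed ball and the circumscribed ball of the ellipse, both centered at the origin (the center of $E$). Since $E=\{(x,y):x^2/a^2+y^2/b^2<1\}$, for every point $(x,y)$ one has the elementary chain
\[
\frac{x^2+y^2}{\max(a,b)^2}\le\frac{x^2}{a^2}+\frac{y^2}{b^2}\le\frac{x^2+y^2}{\min(a,b)^2},
\]
so that the ball of radius $\min(a,b)$ centered at the origin lies in $E$, while the concentric ball of radius $\max(a,b)$ contains $E$. I would therefore set $r_i=\min(a,b)$, $r_e=\max(a,b)$; then $B_{r_i}\subset E\subset B_{r_e}$ and, crucially,
\[
r_e-r_i=\max(a,b)-\min(a,b)=|a-b|.
\]

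It then remains only to recognize $|a-b|$ inside the three quantities of Lemma~\ref{lm:conti}. By parts (i) and (ii), $|u|_{\Ga_t}=[u]_{\Ga_t}=|a-b|\,\frac{a+b}{a^2b^2}\,t$, hence multiplying by $\frac1t\,\frac{a^2b^2}{a+b}$ returns exactly
\[
\frac1t\,\frac{a^2b^2}{a+b}\,|u|_{\Ga_t}=\frac1t\,\frac{a^2b^2}{a+b}\,[u]_{\Ga_t}=|a-b|=r_e-r_i,
\]
which gives the first two identities. For the oscillation I would use part (iii), $\osc_{\Ga_t}u=|a-b|\,\frac{a+b}{a^2b^2}\,\bigl(\frac{2ab}{a+b}-t\bigr)\,t$, so that $\frac1t\,\frac{a^2b^2}{a+b}\,\osc_{\Ga_t}u=|a-b|\bigl(\frac{2ab}{a+b}-t\bigr)$; the factor $\frac{2ab}{a+b}-t$ is positive on the admissible range of $t$ — indeed it is $\ge1$, since $\frac{2ab}{a+b}\ge\sqrt2$ by Cauchy--Schwarz and the normalization $a^{-2}+b^{-2}=1$, while $t$ is controlled by \eqref{radius} — so $\frac1t\,\frac{a^2b^2}{a+b}\,\osc_{\Ga_t}u\ge|a-b|$, and one realizes this number as $r_e-r_i$ by enlarging the outer ball (or shrinking the inner one), keeping the two balls concentric and still sandwiching $E$.

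I do not expect a genuine obstacle: once Lemma~\ref{lm:conti} is in hand, the theorem is little more than a dictionary between $|a-b|$ and the three seminorms/oscillation, and the inscribed and circumscribed balls of an ellipse are completely explicit. The only mildly delicate points are the elementary verification that those two concentric balls do sandwich $E$ (and that $|a-b|$ is the minimal possible gap $r_e-r_i$ among concentric sandwiching balls) and, for the oscillation identity, the observation that the factor $\frac{2ab}{a+b}-t$ stays positive on the range of $t$ allowed by \eqref{radius}, so that the rescaled radii still describe a legitimate annulus around $\pa E$.
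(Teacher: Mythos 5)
Your construction is the paper's own: take the inscribed and circumscribed balls of $E$, both centered at the origin, with radii $\min(a,b)$ and $\max(a,b)$, so that $r_e-r_i=|a-b|$, and then read off the identities from Lemma~\ref{lm:conti}; for the seminorm identities (i)--(ii) this is exactly what the paper does and your argument is complete. Where you genuinely depart from the paper is in the oscillation identity, and there you are in fact more careful than the source: by Lemma~\ref{lm:conti}(iii) one has $\frac1t\,\frac{a^2b^2}{a+b}\,\osc_{\Ga_t}u=|a-b|\bigl(\frac{2ab}{a+b}-t\bigr)$, which equals $|a-b|$ only when $a=b$, whereas the paper's one-line proof asserts that all three formulas follow with the same pair of balls and $r_e-r_i=|a-b|$. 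In particular no single pair of concentric balls can satisfy all three equalities simultaneously unless $a=b$, so the statement must be read (as you do) as three separate assertions, and your patch --- enlarging the outer ball to radius $\min(a,b)+|a-b|\bigl(\frac{2ab}{a+b}-t\bigr)\ge\max(a,b)$ --- is a legitimate way to realize the third one. Your positivity check is also sound and can be made fully explicit: with the normalization $a^{-2}+b^{-2}=1$ one has $\min(a,b)\le\sqrt2\le\max(a,b)$, so \eqref{radius} forces $t<\frac{\min(a,b)}{2\max(a,b)^2}\le\frac{\sqrt2}{4}<\sqrt2-1\le\frac{2ab}{a+b}-1$, hence $\frac{2ab}{a+b}-t>1$. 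The only superfluous item in your plan is the minimality of the gap $|a-b|$ among concentric sandwiching balls, which the theorem never requires; and the discrepancy you detected is harmless for the final symmetry theorem, since the extra factor lies between $1$ and $\frac{2ab}{a+b}\le 2\sqrt 2$, so $|a-b|\le\frac1t\,\frac{a^2b^2}{a+b}\,\osc_{\Ga_t}u=o(1)$ still follows from Lemma~\ref{lemma seminorm asympt}.
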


\begin{proof}
The largest ball contained in $E$ and the smallest ball containing $E$ are centered at the origin and have
radii $\min(a,b)$ and $\max(a,b)$, respectively; hence, $r_e-r_i=|a-b|$ and the desired formulas follow from Lemma \ref{lm:conti}.
\end{proof}


Now, we turn to Serrin problem \eqref{pb torsion}-\eqref{overdet serrin}. The following lemma 
holds for quite general domains in general dimension .

\begin{lm} \label{lemma seminorm asympt}
Let $\Om\subset\RN$ be a bounded domain with boundary of class $C^2$ and let $u\in  C^1(\ovr{\Om})\cap C^2(\Om)$ be a solution of \eqref{pb torsion} satisfying \eqref{overdet serrin}. 
\par
Then
$$
\osc_{\Ga_t}u=o(t) \ \mbox{ as } \ t\to 0^+.
$$
If, in addition, $u\in C^2(\ovr{\Om})$,
then
\begin{equation*}\label{seminorm asympt}
[u]_{\Ga_t} \ \mbox{ and } \ |u|_{\Ga_t}= o(t) \ \mbox{ as } \ t\to 0^+.
\end{equation*}

\end{lm}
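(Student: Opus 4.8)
The plan is to exploit the near-boundary collar structure of $\Ga_t$. Since $\pa\Om$ is of class $C^2$ and $\Om$ is bounded, for $t$ small every $x\in\Ga_t$ has a unique nearest point $p(x)\in\pa\Om$, the map $x\mapsto p(x)$ is a $C^1$ diffeomorphism of $\Ga_t$ onto $\pa\Om$, and $x=p(x)-t\,\nu(p(x))$, where $\nu$ is the outward unit normal; moreover, as parallel hypersurfaces share their normal directions, $\nu(p(x))$ is also the outward unit normal to $\Ga_t$ at $x$. Because $u=0$ on $\pa\Om$ the tangential gradient of $u$ vanishes there, so $Du=u_\nu\,\nu=c\,\nu$ on $\pa\Om$. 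Then, for $x\in\Ga_t$, using $u(p(x))=0$,
\[
u(x)=u(x)-u(p(x))=-t\int_0^1\bigl\langle Du\bigl(p(x)-s\,t\,\nu(p(x))\bigr),\ \nu(p(x))\bigr\rangle\,ds,
\]
and, since $Du$ is uniformly continuous on the compact set $\ovr{\Om}$, the integrand tends to $\langle Du(p(x)),\nu(p(x))\rangle=c$ uniformly in $x\in\Ga_t$ as $t\to0^+$. Hence $u(x)=-c\,t+o(t)$ uniformly on $\Ga_t$, which gives $\osc_{\Ga_t}u=o(t)$ at once.

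Now assume $u\in C^2(\ovr{\Om})$; the strategy for $[u]_{\Ga_t}$ and $|u|_{\Ga_t}$ is to bound the tangential gradient $\na_{\Ga_t}u$ along $\Ga_t$. Taylor expanding $Du$ at $p(x)$,
\[
Du(x)=c\,\nu(p(x))-t\,D^2u(p(x))\,\nu(p(x))+t\,\rho(x,t),\qquad \sup_{x\in\Ga_t}\,|\rho(x,t)|\longrightarrow0\ \ (t\to0^+),
\]
the remainder being uniform since $D^2u$ is uniformly continuous on $\ovr{\Om}$. The crucial algebraic fact is that, \emph{precisely because $u_\nu$ is constant on $\pa\Om$}, the vector $D^2u(p)\,\nu(p)$ is parallel to $\nu(p)$ at every $p\in\pa\Om$: differentiating $\langle Du,\nu\rangle\equiv c$ along a direction $X$ tangent to $\pa\Om$ yields
\[
\langle D^2u(p)\,X,\ \nu(p)\rangle=X(u_\nu)-\langle Du(p),\ D\nu(p)[X]\rangle=0-\langle c\,\nu(p),\ D\nu(p)[X]\rangle=0,
\]
because the Weingarten map $D\nu(p)$ sends tangent vectors to tangent vectors. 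Since the tangent space of $\Ga_t$ at $x$ equals $\nu(p(x))^{\perp}$, projecting the expansion of $Du(x)$ onto it annihilates both $c\,\nu(p(x))$ and $t\,D^2u(p(x))\,\nu(p(x))$, leaving $\na_{\Ga_t}u(x)=t\,\rho(x,t)^{\mathrm{tan}}$; therefore $\sup_{\Ga_t}|\na_{\Ga_t}u|=o(t)$.

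It remains to pass from this gradient bound to the two seminorms. For $|u|_{\Ga_t}$ (meaningful when $\Ga_t$ is a curve), integrating $|\na_{\Ga_t}u|$ along $\Ga_t$ gives $|u(z(s))-u(z(s'))|\le\bigl(\sup_{\Ga_t}|\na_{\Ga_t}u|\bigr)\,\min(|s-s'|,|\Ga_t|-|s-s'|)$, hence $|u|_{\Ga_t}\le\sup_{\Ga_t}|\na_{\Ga_t}u|=o(t)$. For $[u]_{\Ga_t}$ I split the supremum over pairs $z,w\in\Ga_t$ into two regimes. If $|z-w|\ge\de_0$ for a fixed $\de_0>0$, then $|u(z)-u(w)|\le\osc_{\Ga_t}u=o(t)$, so the ratio is $o(t)$. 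If $|z-w|<\de_0$, with $\de_0$ fixed small enough (relative to the reach of $\pa\Om$, which bounds below the reach of $\Ga_t$ for small $t$), the geodesic distance on $\Ga_t$ between $z$ and $w$ is at most $C\,|z-w|$ with $C$ independent of $t$, so $|u(z)-u(w)|\le C\,\bigl(\sup_{\Ga_t}|\na_{\Ga_t}u|\bigr)\,|z-w|=o(t)\,|z-w|$. Combining the two regimes yields $[u]_{\Ga_t}=o(t)$.

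The main obstacle is the identity $\langle D^2u(p)\,\nu(p),X\rangle=0$ for tangent $X$: it is exactly here that the overdetermination \eqref{overdet serrin} is used, and without it one only obtains $\na_{\Ga_t}u=O(t)$ and $\osc_{\Ga_t}u=O(t)$, with no gain. The remaining difficulties are routine bookkeeping — keeping the Taylor remainders uniform over $\Ga_t$ (which is why one assumes $C^1$, respectively $C^2$, regularity up to $\ovr{\Om}$) and the uniform comparability of arc-length and chordal distance on $\Ga_t$ for $t$ small.
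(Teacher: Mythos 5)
Your proof is correct and rests on the same ingredients as the paper's: a Taylor/normal-segment expansion from the foot point on $\pa\Om$ together with $u=0$, $u_\nu=c$ there (for the oscillation), and the identity $\langle D^2u(\ga)\,\nu(\ga),\tau(\ga)\rangle=0$ obtained by differentiating \eqref{overdet serrin} tangentially (for the seminorms), with uniform continuity of $Du$, resp. $D^2u$, supplying the $o(t)$. The only organizational difference is that you prove uniform expansions on all of $\Ga_t$ ($u=-ct+o(t)$ and $\sup_{\Ga_t}|\na_{\Ga_t}u|=o(t)$) and then pass to the seminorms, your chord--arc splitting making the $[u]_{\Ga_t}$ case explicit where the paper evaluates at extremal/mean-value points and disposes of the second seminorm with ``runs similarly.''
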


\begin{proof}
Let $z$ and $z'\in\Ga_t$ points at which $u$ attains its maximum and minimum, respectively 
(for notational semplicity, we do not indicate their dependence on $t$). If $t$ is sufficiently small, 
they have unique projections, say $\ga$ and $\ga'$, on $\pa\Om$, so that we can write that
$z=\ga-t\,\nu(\ga)$ and $z=\ga'-t\,\nu(\ga')$.
\par
Since both $u$ and $u_\nu$ are constant on $\pa\Om$,  Taylor's formula gives:
$$
u(z)-u(z')=\int_0^t [\langle Du(\ga'-\tau\,\nu(\ga')),\nu(\ga')\rangle-\langle Du(\ga-\tau\,\nu(\ga)),\nu(\ga)\rangle]\,d\tau.
$$
By the (uniform) continuity of the first derivatives of $u$ (and the normals), the right-hand side of the
last identity is a $o(t)$ as $t\to 0^+$. 
\par
We shall prove the second part of the theorem only for the semi-norm $[u]_{\Ga_t} $, since that for $|u|_{\Ga_t}$ runs similarly.   
\par
Let $s$ and $s'\in [0, |\Ga_t|]$ attain the first supremum in \eqref{sdf}; 
we apply \eqref{mean value thm} and obtain that
\begin{equation*}
\frac{|u(z(s))-u(z(s'))|}{\min(|s-s'|,|\Ga_t|-|s-s'|)} = |\langle Du(z(\si)), z'(\si)\rangle|,
\end{equation*}
for some $\sigma \in [0,|\Ga_t|)$. Let $\ga\in \pa \Om$ be the projection of
the point $z=z(\si)$ on $\pa\Om$, that is
$
z=\ga-t\, \nu(\ga).
$
\par
Since $\pa\Om$ and $\Ga_t$ are parallel,  the tangent unit vector $\tau(\ga)$ to the curve $\si\mapsto\ga(\si)\in\pa\Om$ at $\ga$ equals the tangent unit vector $\tau(z)$ to the curve $\si\mapsto z(\si)\in\Ga_t$ at $z$; the same occurs for the corresponding normal unit vectors $\nu(\ga)$ and $\nu(z)$.  
\par
It is clear that $\langle Du(\ga), \tau(\ga)\rangle=0$ and, since $u\in C^2(\ovr{\Om})$, by differentiating \eqref{overdet serrin}, we also have that 
$
\langle D^2u(\ga)\, \nu(\ga), \tau(\ga) \rangle=0;
$
thus, by Taylor's formula, we obtain that
\begin{eqnarray*}
\langle Du(z(\si)), z'(\si)\rangle\!\!\!&=&\!\!\!\langle Du(\ga), \tau(\ga)\rangle- t\,\langle D^2u(\ga)\, \nu(\ga), \tau(\ga) \rangle + R(s,s',t)=\\
&&\!\!\!R(s,s',t).
\end{eqnarray*}
Since the second derivatives of $u$ are uniformly continuous on $\ovr{\Om}$, we conclude that the
remainder term $R(s,s',t)$ is a $o(t)$ as $t\to 0^+$.
\end{proof}

\begin{theorem}
Let $E$ be an ellipse of semi-axes $a$ and $b$ and assume that in $E$ there exists a solution $u$ of \eqref{pb torsion} satisfying \eqref{overdet serrin}. 
\par
Then $a=b$, that is $E$ is a ball and $u$ is radially symmetric.
\end{theorem}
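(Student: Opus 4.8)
The plan is to combine the two results already established: Theorem~\ref{thm seminorm stability}, which for an ellipse expresses \emph{exactly} the deviation $r_e-r_i$ from radial symmetry as a multiple of $[u]_{\Ga_t}$ (equivalently $|u|_{\Ga_t}$ or $\osc_{\Ga_t}u$), with a constant of order $t^{-1}$ as $t\to0^+$; and Lemma~\ref{lemma seminorm asympt}, which says that the overdetermination \eqref{overdet serrin} forces these seminorms to vanish like $o(t)$. No further ingredient is needed.

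First I would reduce to the normalized ellipse $a^{-2}+b^{-2}=1$. Rescaling $E$ by a factor $\la>0$ multiplies both semi-axes by $\la$, multiplies the solution of \eqref{pb torsion} by $\la^2$ and its normal derivative on $\pa E$ by $\la$; hence it affects neither the validity of \eqref{overdet serrin} nor the conclusion $a=b$, and we may take $\la=\sqrt{a^{-2}+b^{-2}}$. After this normalization the unique solution of \eqref{pb torsion} in $E$ is the polynomial \eqref{u ellipse} (up to the harmless scaling of the right-hand side of the equation); in particular $u\in C^2(\ovr E)$ and $\pa E$ is of class $C^2$, so all the hypotheses of Lemma~\ref{lemma seminorm asympt} are satisfied. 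I would also fix once and for all some $t>0$ in the admissible range \eqref{radius}.

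Next I would put the two estimates side by side. By Theorem~\ref{thm seminorm stability} the optimal annulus around $\pa E$ is realized by the concentric balls of radii $r_i=\min(a,b)$ and $r_e=\max(a,b)$, so that
$$
|a-b|=r_e-r_i=\frac{a^2b^2}{a+b}\cdot\frac{[u]_{\Ga_t}}{t}\,.
$$
On the other hand, since $u\in C^2(\ovr E)$ satisfies \eqref{overdet serrin}, Lemma~\ref{lemma seminorm asympt} gives $[u]_{\Ga_t}=o(t)$, i.e. $[u]_{\Ga_t}/t\to0$ as $t\to0^+$. Letting $t\to0^+$ in the displayed identity, whose left-hand side is independent of $t$, we obtain $|a-b|=0$, that is $a=b$. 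Hence $E$ is a ball and, by uniqueness of the solution of \eqref{pb torsion}, $u$ is radially symmetric. (One could equally well avoid Theorem~\ref{thm seminorm stability} and argue straight from Lemma~\ref{lm:conti}(iii), which gives $\osc_{\Ga_t}u/t\to 2|a-b|/(ab)$ as $t\to0^+$, in conflict with the $o(t)$ bound of Lemma~\ref{lemma seminorm asympt} unless $a=b$.)

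I do not expect a real obstacle in this last argument: the whole difficulty has been absorbed into Theorem~\ref{thm seminorm stability}, the point being precisely that for ellipses the stability constant $C_t$ in \eqref{estimate} is of the sharp order $t^{-1}$ --- slow enough to be overwhelmed by the decay \eqref{oscillation o(t)} produced by \eqref{overdet serrin} --- whereas the moving-planes method yields, for general domains, a $C_t$ blowing up exponentially as $t\to0^+$ and hence useless for this scheme. The one point deserving care when writing out the details is the logical structure: when $a\ne b$ the explicit function \eqref{u ellipse} does \emph{not} satisfy \eqref{overdet serrin}, so the argument is really by contradiction --- \emph{assuming} \eqref{overdet serrin}, Lemma~\ref{lemma seminorm asympt} and Lemma~\ref{lm:conti} assign incompatible vanishing rates to $[u]_{\Ga_t}$ as $t\to0^+$ unless the semi-axes coincide.
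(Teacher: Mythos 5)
Your proof is correct and follows essentially the same route as the paper: it combines the exact identity of Theorem~\ref{thm seminorm stability} with the $o(t)$ decay of Lemma~\ref{lemma seminorm asympt} and lets $t\to0^+$ to force $|a-b|=0$. The extra remarks on normalization and the alternative argument via Lemma~\ref{lm:conti}(iii) are fine but not needed beyond what the paper already does.
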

\begin{proof}
Theorem \ref{thm seminorm stability} and Lemma \ref{lemma seminorm asympt} in any case yield that
\begin{equation*}
|a-b| = o(1) \ \mbox{ as } \ t\to 0^+,
\end{equation*}
 which implies the assertion.
\end{proof}

\end{document}